\newtheorem{question}{Question}[section]
\newtheorem{theorem}[question]{Theorem}
\newtheorem{lemma}[question]{Lemma}
\newtheorem{corollary}[question]{Corollary}
\newtheorem{definition}[question]{Definition}
\newtheorem{proposition}[question]{Proposition}
\title{Topological games and chain conditions}
\author{Santi Spadaro}
\address{Instituto de Matematica e Estatistica (IME-USP) \\ Universidade de Sao Paulo \\ Rua do Matao, 1010 - Cidade Universitaria \\ , 05508-090 Sao Paulo - SP \\ Brazil}
\email{santidspadaro@gmail.com}
\subjclass[2000]{Primary: 54A25, 91A44; Secondary: 54F05, 54G10}
\keywords{chain conditions, selectively ccc, selection principles, weakly Lindel\"of, topological games, cardinal inequality}
\begin{document}
\maketitle

\begin{abstract}
We apply the theory of infinite two-person games to two well-known problems in topology: Suslin's Problem and Arhangel'skii's problem on $G_\delta$ covers of compact spaces. More specifically, we prove results of which the following two are special cases: 1) every linearly ordered topological space satisfying the game-theoretic version of the countable chain condition is separable and 2) in every compact space satisfying the game-theoretic version of the weak Lindel\"of property, every cover by $G_\delta$ sets has a continuum-sized subcollection whose union is $G_\delta$-dense. 
\end{abstract}

\section{Introduction}

Infinite games have been exploited in recent years to give partial answers to various important problems in General Topology, including van Douwen's $D$-space problem (see \cite{Au}), Arhangel'skii's problem on the cardinality of Lindel\"of spaces with points $G_\delta$ (see \cite{ST}, \cite{AB}) and Bell, Ginsburg and Woods's problem on the cardinality of weakly Lindel\"of first-countable regular spaces (see \cite{BS}, \cite{AS}, \cite{ASZ}). We use them to give partial ZFC answers to Suslin's Problem and Arhangel'skii's question of whether in every compact space, a cover by $G_\delta$ sets has a continuum-sized subfamily with a $G_\delta$-dense union.

It was already known to Cantor that the real line is the unique complete dense linear order without endpoints which is separable. In the first issue of Fundamenta Mathematicae, Suslin asked whether separability could be replaced with the countable chain condition in this result. Any counterexample to this assertion came to be known as a Suslin Line. The problem turned out to be independent of the usual axioms of ZFC: under $MA_{\omega_1}$ there are no Suslin Lines, and thus the answer to Suslin's question is yes. However, Suslin Lines can be found in certain models of set theory, for example under $V=L$. Various mathematicians wondered whether there was a natural strengthening of the ccc which implied a positive answer to Suslin's Problem. For example, Knaster proved that every ordered continuum with the Knaster property is separable and Shapirovskii proved that every compact space with countable tightness and Shanin's condition is separable (see \cite{T}).

Another strengthening of the ccc was suggested by Scheepers (\cite{SchCCC}) and involves a two-person game in infinitely many moves: at inning $n$ player one chooses a maximal family of pairwise disjoint open sets $\mathcal{U}_n$ and player two picks an open set $U_n \in \mathcal{U}_n$. Player two wins if $\overline{\bigcup \{U_n: n < \omega \}}=X$. Let's call \emph{playful ccc} the property that two has a winning strategy in this game. The name is justified by the fact that if $X$ contains an uncountable (maximal) pairwise disjoint family of non-empty open sets, all one has to do to win is choose that family at every inning. So one has a winning strategy in every space which does not have the countable chain condition. Hence the playul ccc implies the usual ccc.

Daniels, Kunen and Zhou \cite{DKZ} proved that, unlike the ccc, the playful ccc is productive in ZFC. We prove that every complete dense linear order with the playful ccc is separable.

The weak Lindel\"of number of a topological space $X$ ($wL(X)$) is defined as the minimum cardinal $\kappa$ such that every open cover has a $\kappa$-sized subfamily with a dense union. A space is called weakly Lindel\"of if it has countable weak Lindel\"of number. Every Lindel\"of space is clearly weakly Lindel\"of and it is not hard to prove that every space with the countable chain condition is weakly Lindel\"of. Thus the weak Lindel\"of property is somewhere between being a covering property and a chain condition. Woods \cite{W} used the weak Lindel\"of property to characterize the $C^*$-embedded subsets of the Stone-Cech compactification of the integers under CH and Bell Ginsburgh and Woods \cite{BGW} exploited it in their elegant generalization of Arhangel'skii's theorem on the cardinality of compact first-countable spaces.

Given a topological space $X$, we indicate with $X_\delta$ the space whose underlying set is $X$ and whose topology is generated by the $G_\delta$ sets of $X$. Arhangel'skii asked (see \cite{J}) whether $wL(X_\delta) \leq 2^{\aleph_0}$ for every compact space $X$. This problem remains open.

Juh\'asz gave a partial positive answer by proving that $w(X_\delta) \leq 2^{c(X)}$ for every compact space $X$ (a related result was given in \cite{KMS} for another chain-condition type cardinal invariant known as Noetherian type). In particular, Arhangel'skii's question has a positive answer for compact ccc spaces. Here we prove another partial positive result: if $X$ is a countably compact space where player two has a winning strategy in weak Lindel\"of game of length $\omega_1$ then $wL(X_\delta) \leq 2^{\aleph_0}$. 

Let's recall some standard notation regarding games.

Given collections $\mathcal{A}$ and $\mathcal{B}$ of families subsets of a topological space $X$, we indicate with $G^\kappa_1(\mathcal{A}, \mathcal{B})$ (respectively $G^\kappa_{fin}(\mathcal{A}, \mathcal{B})$) the two-player game in $\kappa$ many innings where at inning $\alpha$ player one plays $\mathcal{A}_\alpha \in \mathcal{A}$ and player two plays $A_\alpha \in \mathcal{A}_\alpha$ (respectively, $\mathcal{F}_\alpha \in [\mathcal{A}]^{<\omega}$) and player two wins if $\{A_\alpha: \alpha < \kappa \} \in \mathcal{B}$ (respectively $\bigcup \mathcal{F}_\alpha \in \mathcal{B}$).

We indicate with $\mathcal{C}^X$ the collection of all maximal families of pairwise disjoint non-empty open sets of $X$ and with $\mathcal{O}_D^X$ the collection of all open families with dense union. Obviously $\mathcal{C}^X \subset \mathcal{O}^X_D$. When there is no danger of ambiguity we will omit $X$ in the superscript.

In our proofs we will often use elementary submodels of the structure $(H(\mu), \epsilon)$. Dow's survey \cite{D} is enough to read our paper, and we give a brief informal refresher here. Recall that $H(\mu)$ is the set of all sets whose transitive closure has cardinality smaller than $\mu$. When $\mu$ is regular, $H(\mu)$ is known to satisfy all axioms of set theory, except the power set axiom. We say, informally, that a formula is satisfied by a set $S$ if it is true when all bounded quantifiers are restricted to $S$. A set $M \subset H(\mu)$ is said to be an elementary submodel of $H(\mu)$ (and we write $M \prec H(\mu)$) if a formula with parameters in $M$ is satisfied by $H(\mu)$ if and only if it is satisfied by $M$. 

The downward Lowenheim-Skolem theorem guarantees that for every $S \subset H(\mu)$, there is an elementary submodel $M \prec H(\mu)$ such that $|M| \leq |S| \cdot \omega$. This theorem is enough in many applications, but it is often useful (especially in cardinal bounds for topological spaces) to have the following closure property. We say that $M$ is $\kappa$-closed if for every $S \subset M$ such that $|S| \leq \kappa$ we have $S \in M$. For every countable set $S \subset H(\theta)$ there is always a $\kappa$-closed elementary submodel $M \prec H(\theta)$ such that $|M|=2^{\kappa}$ and $S \subset M$.

The following theorem is also used often: let $M \prec H(\mu)$ such that $\kappa + 1 \subset M$ and $S \in M$ be such that $|S| \leq \kappa$. Then $S \subset M$.

Undefined notions can be found in \cite{E} for topology and \cite{Ku} for set theory. 

\section{Arhangel'skii's problem about $G_\delta$ covers in compact spaces}

A game-theoretic version of the weak Lindel\"of property can be obtained by considering the game $G^\kappa_1(\mathcal{O}, \mathcal{O}_D)$. At inning $\alpha < \kappa$ player one chooses an open cover $\mathcal{U}_\alpha$ and player two chooses an open set $U_\alpha \in \mathcal{U}_\alpha$. Player two wins if $\overline{\bigcup \{U_\alpha: \alpha < \kappa \}}=X$. If player two has a winning strategy in $G^\kappa(\mathcal{O}, \mathcal{O}_D)$, then $wL(X) \leq \kappa$. But there are even compact spaces where player one has a winning strategy in $G^{\omega_1}_1(\mathcal{O}, \mathcal{O}_D)$ (and hence player two can't have a winning strategy in that game, see \cite{AS}).

In \cite{ASZ} we proved that the weak Lindel\"of game is the dual of the \emph{open-picking game}. It will be convenient to exploit this duality in the proof of our partial solution to Arhangel'skii's problem.

\begin{definition}
The game $G^p_o(\kappa)$ is the two-player game in $\kappa$-many innings defined as follows: at inning $\alpha < \kappa$, player one picks a point $x_\alpha \in X$ and player two chooses an open set $U_\alpha$ such that $x_\alpha \in U_\alpha$. Player one wins if $\overline{\bigcup \{U_\alpha: \alpha < \kappa \}}=X$.
\end{definition}

\begin{lemma} \cite{ASZ} {\ \\}
\begin{enumerate}
\item \label{poplayer1} Player one has a winning strategy in the $G^\kappa_1(\mathcal{O}, \mathcal{O}_D)$ if and only if player two has a winning strategy in $G^p_o(\kappa)$.
\item \label{poplayer2} Player two has a winning strategy in $G^\kappa_1(\mathcal{O}, \mathcal{O}_D)$ if and only if player one has a winning strategy in $G^p_o(\kappa)$.
\end{enumerate}
\end{lemma}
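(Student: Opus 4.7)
In all four implications I will translate a winning strategy in one game into a winning strategy in the other, exploiting a common feature of the two games: in both, player two picks open sets and the outcome depends only on the density of $\bigcup_\alpha U_\alpha$. What changes between the games is the constraint on player two's moves (an element of a prescribed cover $\mathcal{U}_\alpha$, versus a neighborhood of a prescribed point $x_\alpha$) and which player is rewarded by density.

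For part (1), given $\sigma$ winning for player one in $G^\kappa_1(\mathcal{O},\mathcal{O}_D)$, I define a strategy for player two in $G^p_o(\kappa)$ by simulation: at inning $\alpha$, on seeing $x_\alpha$, let $\mathcal{U}_\alpha=\sigma(\mathcal{U}_0,U_0,\dots,\mathcal{U}_{\alpha-1},U_{\alpha-1})$ and reply with any element $U_\alpha\in\mathcal{U}_\alpha$ containing $x_\alpha$; such an element exists because $\mathcal{U}_\alpha$ covers $X$. The resulting $(\mathcal{U}_\alpha,U_\alpha)$-sequence is a $\sigma$-legal $G^\kappa_1$-play, so $\overline{\bigcup_\alpha U_\alpha}\neq X$ and player two wins. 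Conversely, given $\tau$ winning for player two in $G^p_o(\kappa)$, player one plays the family $\mathcal{U}_\alpha=\{\tau(x_0,U_0,\dots,x_{\alpha-1},U_{\alpha-1},x):x\in X\}$ at inning $\alpha$; this is a cover because each $\tau(\ldots,x)$ is a neighborhood of $x$, and each response $U_\alpha$ of player two factors as $\tau(\ldots,x_\alpha)$ for some point $x_\alpha$ chosen by AC, producing a $\tau$-legal $G^p_o$-play and hence $\overline{\bigcup_\alpha U_\alpha}\neq X$.

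For part (2), the direction $(\Leftarrow)$ is symmetric to the first half of part (1): a winning strategy $\tau$ for player one in $G^p_o$ is converted into one for player two in $G^\kappa_1$ by letting player two pick any $U_\alpha\in\mathcal{U}_\alpha$ containing the simulated point $x_\alpha=\tau(\text{simulated history})$, so that the resulting $\tau$-legal $G^p_o$-play has dense union. The direction $(\Rightarrow)$ is the main obstacle, since player one in $G^p_o$ must commit to $x_\alpha$ before seeing the neighborhood $U_\alpha$ that player two will pick, which obstructs a straight simulation. My plan is to attach, by transfinite recursion and the axiom of choice, to each finite $G^p_o$-history $h$ an auxiliary sequence of covers in $G^\kappa_1$ whose $\sigma$-responses are contained in the $U$'s of $h$, and to define $\tau(h)$ as a point inside the next $\sigma$-response; density of the simulated $\sigma$-play then transfers to the actual $G^p_o$ play. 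The technical heart of this construction is the inductive choice of auxiliary covers that force $\sigma$'s output into prescribed open neighborhoods; carrying this out will presumably use mild separation properties of $X$ or an adjustment of $\sigma$ to a canonical form, and this is where I expect the subtlety of the argument to sit.
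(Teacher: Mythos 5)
Three of the four implications in your proposal (both halves of (1), and the $\Leftarrow$ half of (2)) are correct, routine simulations, and match the standard duality arguments; the paper itself only cites \cite{ASZ} for those. The problem is the $\Rightarrow$ half of (2), which is the only direction the paper proves and the only one used later (in Theorem \ref{ArhanQ}): there you stop at a plan and explicitly defer "the technical heart". Moreover, the plan as stated does not work. You propose to define $\tau(h)$ as "a point inside the next $\sigma$-response". But player two in $G^p_o(\kappa)$ may then answer with an arbitrary, possibly much smaller, neighbourhood $U_\alpha$ of that point, and nothing guarantees that this particular $U_\alpha$ arises as $\sigma$'s response to \emph{any} cover extending the auxiliary history; at that moment your simulation breaks, and no choice of "auxiliary covers that force $\sigma$'s output into prescribed open neighborhoods" can repair it, since $\sigma$ is given, not adjustable.

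The missing idea is a pointwise "richness" claim about $\sigma$, which needs no separation axioms and no canonical form: for any history $(\mathcal{O}_\alpha:\alpha<\beta)$ of covers, let $\mathcal{V}$ be the family of all open sets $V$ such that $V\neq\sigma((\mathcal{O}_\alpha:\alpha<\beta)^\frown(\mathcal{U}))$ for every open cover $\mathcal{U}$. If $\mathcal{V}$ were itself an open cover, then $\sigma((\mathcal{O}_\alpha:\alpha<\beta)^\frown(\mathcal{V}))$ would be an element of $\mathcal{V}$, contradicting the definition of $\mathcal{V}$; hence there is a point $x\notin\bigcup\mathcal{V}$, and \emph{every} neighbourhood $U$ of $x$ is realizable as $\sigma((\mathcal{O}_\alpha:\alpha<\beta)^\frown(\mathcal{U}))$ for some cover $\mathcal{U}$. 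Player one's strategy in $G^p_o(\kappa)$ is to play such a point $x_\alpha$ relative to the auxiliary cover-history built so far; whatever neighbourhood $U_\alpha$ player two returns, one chooses (by AC) a witnessing cover $\mathcal{O}_\alpha$ and extends the auxiliary history. The resulting sequence of $U_\alpha$'s is then the sequence of responses of $\sigma$ in a legitimate play of $G^\kappa_1(\mathcal{O},\mathcal{O}_D)$, so its union is dense and player one wins $G^p_o(\kappa)$. This is exactly the paper's argument; your write-up needs this claim (choosing $x_\alpha$ outside the union of the "unreachable" open sets, rather than inside one reachable one) to close the gap.
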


\begin{proof}
We prove only the direct implication of ($\ref{poplayer2}$), because it's the only one we will need in our proof of Theorem $\ref{ArhanQ}$ below, and we refer the reader to \cite{ASZ} for the other implications.

Let $\sigma$ be a winning strategy for player two in $G^\kappa_1(\mathcal{O}, \mathcal{O}_D)$ on some space $X$.

\noindent {\bf Claim}. Let $(\mathcal{O}_\alpha: \alpha < \beta)$ be a sequence of open covers. Then there is a point $x \in X$ such that, for every neighbourhood $U$ of $x$ there is an open cover $\mathcal{U}$ with $U=\sigma((\mathcal{O}_\alpha: \alpha < \beta)^\frown (\mathcal{U}))$.

\begin{proof}[Proof of Claim]
Recalling that $\mathcal{O}$ denotes the set of all open covers of $X$, let $\mathcal{V}=\{V$ open: $(\forall \mathcal{U} \in \mathcal{O})(V \neq \sigma((\mathcal{O}_\alpha: \alpha < \beta)^\frown(\mathcal{U})) \}$.  Its definition easily implies that $\mathcal{V}$ cannot be an open cover, and hence there is a point $x \in X \setminus \bigcup \mathcal{V}$. By definition of $\mathcal{V}$ we must have that for every neighbourhood $U$ of $x$ there is an open cover $\mathcal{U}$ such that $U=\sigma((\mathcal{O}_\alpha: \alpha < \beta)^\frown (\mathcal{U}))$ and hence we are done.
\renewcommand{\qedsymbol}{$\triangle$}
\end{proof}

We are now going to define a winning strategy $\tau$ for player one in $G^p_o(\kappa)$.

Use the Claim to choose a point $x_0$ such that, for every neighbourhood $U$ of $x_0$ there is an open cover $\mathcal{U}$ with $\sigma((\mathcal{U}))=U$ and let $\tau(\emptyset)=x_0$.

Suppose we have defined $\tau$ for the first $\alpha$ many innings. Let now $\{V_\beta: \beta \leq \alpha \}$ be a sequence of open sets and $\{\mathcal{O}_\beta: \beta < \alpha\}$ be a sequence of open covers such that $V_\beta=\sigma((\mathcal{O}_\gamma: \gamma \leq \beta ))$, for every $\beta < \alpha$. Use the claim to choose a point $x_\alpha$ such that, for every open neighbourhood $U$ of $x_\alpha$ there is an open cover $\mathcal{O}$ with $U=\sigma((\mathcal{O}_\beta: \beta < \alpha)^\frown (\mathcal{O}))$ and let $\tau((V_\beta: \beta \leq\alpha))=x_\alpha$.

We now claim that $\tau$ is a winning strategy for player one in $G^p_o(\kappa)$. Indeed, let $(x_0, V_0, x_1, V_1, \dots x_\alpha, V_\alpha, \dots )$ be a play where player one uses strategy $\tau$. Then there must be a sequence of open covers $\{\mathcal{O}_\alpha: \alpha < \kappa \}$ such that $V_\beta=\sigma((\mathcal{O}_\alpha: \alpha < \beta ))$, for every $\beta < \kappa$. Since $\sigma$ is a winning strategy for two in $G^\kappa_1(\mathcal{O}, \mathcal{O}_D)$ then $\bigcup \{V_\alpha: \alpha < \kappa \}$ is dense in $X$ and this proves that $\tau$ is a winning strategy for player one in $G^p_o(\kappa)$.
\end{proof}

\begin{theorem} \label{ArhanQ}
Let $\kappa$ be an uncountable cardinal and $X$ be a countably compact regular space where player two has a winning strategy in $G^{\kappa}_1(\mathcal{O}, \mathcal{O}_D)$. Then $wL(X_\delta) \leq 2^{<\kappa}$.
\end{theorem}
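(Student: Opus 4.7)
The plan is to combine the dual open-picking game with an elementary submodel argument and the countable compactness of $X$. By Lemma 2.2(2), fix a winning strategy $\tau$ for player one in $G^p_o(\kappa)$. Given any cover $\mathcal{U}$ of $X$ by $G_\delta$ sets, I would choose an elementary submodel $M\prec H(\theta)$ (for $\theta$ sufficiently large and regular) with $|M|=2^{<\kappa}$, $[M]^{<\kappa}\subseteq M$, and $\tau,\mathcal{U},X\in M$; such an $M$ exists since $(2^{<\kappa})^{<\kappa}=2^{<\kappa}$ for regular $\kappa$. The candidate small subfamily is $\mathcal{V}=\mathcal{U}\cap M$, which has size at most $2^{<\kappa}$.

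It suffices to show that $X\cap M$ is $G_\delta$-dense in $X$: given any non-empty $G_\delta$ set $G$ and any point $x\in G\cap M$, elementarity (with $x,\mathcal{U}\in M$) produces $U\in\mathcal{U}\cap M=\mathcal{V}$ with $x\in U$, so $U\cap G\ni x$, and $\bigcup\mathcal{V}$ is $G_\delta$-dense.

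To prove the $G_\delta$-density of $X\cap M$, fix a non-empty $G_\delta$ set $G=\bigcap_{n<\omega}W_n$ and use regularity to arrange $\overline{W_{n+1}}\subseteq W_n$. For each $n$ I would extract a point $x_n\in\overline{W_{n+1}}\cap M$ by running $G^p_o(\kappa)$ with player one following $\tau$ and player two attempting at each inning to play an open set from $M$ disjoint from $\overline{W_{n+1}}$; if player two could always succeed then $\bigcup V_\alpha$ would be contained in $X\setminus\overline{W_{n+1}}$ yet dense in $X$, which is impossible since $W_{n+1}$ is a non-empty open set. Hence at some stage player two is blocked, producing a point $x_\alpha\in M$ adhering to $\overline{W_{n+1}}$; set $x_n=x_\alpha$. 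The resulting $\omega$-sequence $(x_n)_{n<\omega}$ lies in $M$ by $\omega$-closure (as $\omega<\kappa$), so by countable compactness it has a cluster point in $\bigcap_n\overline{W_n}=G$, and elementarity applied to the statement that such a cluster point exists (all parameters in $M$) places one inside $M$, proving $G\cap M\neq\emptyset$.

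The main obstacle is justifying the ``blocking'' step: one has to verify that restricting player two to moves in $M$ still lets the game's winning condition force $x_\alpha\in\overline{W_{n+1}}$. This requires enough opens in $M$ to witness the topological approximation, and is the game-theoretic analog of the caliber / free-sequence arguments used in Juh\'asz's theorem $w(X_\delta)\leq 2^{c(X)}$ for compact spaces; here the role usually played by ccc is played by the winning strategy $\tau$ together with countable compactness.
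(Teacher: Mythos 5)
The reduction to ``$X \cap M$ is $G_\delta$-dense'' is where your proposal breaks down, in two ways. First, it is a genuinely stronger statement than the theorem: it would bound the density of $X_\delta$ (hence also its cellularity) by $2^{<\kappa}$, not merely its weak Lindel\"of number, and nothing in your argument actually delivers it. Second, and concretely, the blocking step does not do what you need. If player two is constrained to answer with open sets belonging to $M$ (which is forced on you, since you need the initial segments of the play to lie in $M$ so that $x_\alpha=\tau((V_\beta:\beta<\alpha))\in M$), then ``blocked at stage $\alpha$'' only means that every open $U\in M$ containing $x_\alpha$ meets $\overline{W_{n+1}}$, i.e.\ $x_\alpha$ adheres to $W_{n+1}$ in the coarser topology generated by the open sets lying in $M$; it does not give $x_\alpha\in\overline{W_{n+1}}$. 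Since $G$ and the $W_n$ are arbitrary and are not elements of $M$, you cannot upgrade this by elementarity, and the final appeal to elementarity (``the statement that such a cluster point exists has all parameters in $M$'') is illegitimate: the statement that a cluster point of $(x_n)$ lies in $G$ has $G$ as a parameter, and $G\notin M$. If instead you allow player two to play arbitrary open sets disjoint from $\overline{W_{n+1}}$, being blocked does give $x_\alpha\in\overline{W_{n+1}}$, but then the play is no longer inside $M$, so $x_\alpha\in M$ fails and the cluster point need not be in $M$. This tension between membership in $\overline{W_{n+1}}$ and membership in $M$ is exactly the obstacle you flag at the end, and it is not a technicality that ``enough opens in $M$'' resolves.

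The paper avoids the problem by never trying to put points of $M$ inside the bad $G_\delta$ set. Assuming $\bigcup(\mathcal{U}\cap M)$ misses a $G_\delta$ set $V=\bigcap_n V_n$ with $\overline{V_{n+1}}\subset V_n$, it uses the nested representations together with countable compactness to replace each $G_\delta$-disjointness $B_x\cap V=\emptyset$ (where $x\in X\cap M$ and $B_x\in\mathcal{U}\cap M$) by a finite-level open disjointness $B^x_{m(x)}\cap V_{n(x)}=\emptyset$ with $B^x_{m(x)}\in M$; it then proves that $X\cap M$ is itself countably compact (this is where $<\kappa$-closedness and elementarity do their real work) to extract a single index $k$ and a family $\mathcal{B}'\subset M$ of open sets, all disjoint from $V_k$, covering $X\cap M$; finally it runs $G^p_o(\kappa)$ with player one using $\tau$ and player two answering from $\mathcal{B}'$, so the union of player two's sets misses the nonempty open set $V_k$, contradicting that $\tau$ is winning. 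So your overall frame (duality, a $<\kappa$-closed submodel, playing the game inside $M$) matches the paper, but the key mechanism --- reducing $G_\delta$-disjointness to open disjointness at a uniform finite level via countable compactness of $X\cap M$ --- is missing, and the route you propose in its place has a genuine gap.
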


\begin{proof}
Denote by $\rho$ the set of all open subsets of $X$. Fix a winning strategy $\tau$ for player one in $G^p_o(\kappa)$ and let $\mathcal{U}$ be an open cover of $X_\delta$. Since $X$ is regular, we can assume without loss of generality that, for every $U \in \mathcal{U}$, there are $\{U_n: n < \omega \} \subset \rho$ such that $\overline{U_{n+1}} \subset U_n$ and $U=\bigcap \{U_n: n < \omega \}=\bigcap \{\overline{U_n}: n < \omega \}$. Let $M$ be a $<\kappa$-closed elementary submodel of $H(\theta)$, for large enough regular $\theta$ such that $X, \rho, \tau, \mathcal{U} \in M$, $|M|=2^{<\kappa}$ and $2^{<\kappa}+1 \subset M$. We claim that $\mathcal{U} \cap M$ is dense in $X_\delta$. Suppose this is not the case and let $V$ be an open subset of $X_\delta$ such that $V \cap \bigcup (\mathcal{U} \cap M) =\emptyset$. We can assume that $V=\bigcap \{V_n: n <\omega \}$, where $\{V_n: n <\omega \}$ is a family of open sets of $X$ such that $\overline{V_{n+1}} \subset V_n$, for every $n<\omega$. 

\noindent {\bf Claim}. $X \cap M$ is countably compact.

\begin{proof}[Proof of Claim] Suppose that is not true and let $A \subset X \cap M$ be a countable set with no accumulation points in $X \cap M$. By countable compactness of $X$, $A$ must have an accumulation point $x \in X$. In other words we have: 
$$H(\theta) \models (\exists x \in X)(\forall U \in \rho)(x \in U \Rightarrow U \cap A \neq \emptyset).$$ 

By $<\kappa$-closedness of $M$ we have that $A \in M$, and hence, by elementarity 
$$M \models (\exists x \in X)(\forall U \in \rho)(x \in U \Rightarrow U \cap A \neq \emptyset).$$ 

This means that there is $p \in X \cap M$, such that for every neighbourhood $U$ of $p$ with $U \in M$ we have $U \cap A \neq \emptyset$. It follows that:
$$M \models (\forall U \in \rho)(p \in U \Rightarrow U \cap A \neq \emptyset).$$
Hence, by elementarity:
$$H(\theta) \models (\forall U \in \rho)(p \in U \Rightarrow U \cap A \neq \emptyset).$$ 

But that contradicts the fact that $A$ has no accumulation points in $X \cap M$.
\renewcommand{\qedsymbol}{$\triangle$}
\end{proof}

For all $x \in X \cap M$ there is $B_x \in \mathcal{U} \cap M$ such that $x \in B_x$, and hence $B_x \cap V=\emptyset$. Since $B_x \in M$, there are $\{B^x_n: n < \omega\} \subset M \cap \rho$ such that $B_x=\bigcap B^x_n$ and $\overline{B^x_{n+1}} \subset B^x_n$. Fix $x \in X \cap M$. By compactness, $B_x \cap V=\emptyset$ implies the existence of positive integers $m(x)$ and $n(x)$ such that $B^x_{m(x)} \cap V^x_{n(x)}=\emptyset$. Let $\mathcal{B}=\{B^x_{m(x)}: x \in X \cap M \}$ and $\mathcal{B}_n=\{B \in \mathcal{B}: B \cap V_n=\emptyset \}$. Set $B_n=\bigcup \mathcal{B}_n$. Then $\{B_n: n < \omega \}$ is an open cover of $X \cap M$ and hence by the Claim there is an integer $k<\omega$ such that $\{B_n: n \leq k \}$ covers $X \cap M$. Let $\mathcal{B}'=\bigcup \{\mathcal{B}_n: n \leq k \} \subset M$ and note that $\mathcal{B'}$ is a cover of $X \cap M$.

We're going to play a game of $G^p_0(\kappa)$ where player one uses $\tau$ and player two picks their moves inside $\mathcal{B}'$.

More precisely, in the first inning player one plays the point $x_0=\tau(\emptyset)$. Note that, since $\tau \in M$, $x_0 \in X \cap M$, there is an open set $B_0 \in \mathcal{B}'$ such that $x_0 \in B_0$. Let $\alpha < \kappa$ and $B_\beta \in \mathcal{B}'$ be the open set played by player two at inning $\beta$, for every $\beta < \alpha$. Since $\alpha$ is countable and $M$ is $<\kappa$-closed, we have $\{B_\beta: \beta < \alpha \} \in M$ and hence $x_\alpha=\tau ((B_\beta: \beta < \alpha)) \in M$. So there is $B_\alpha \in \mathcal{B}'$ such that $x_\alpha \in B_\alpha$. Since $\tau$ is a winning strategy for player one, we must have $\overline{\bigcup \{B_\alpha: \alpha < \kappa \}}=X$, but this contradicts the fact that $B_\alpha \cap V_k=\emptyset$, for every $\alpha < \omega_1$. 
\end{proof}

\begin{corollary}
Let $X$ be a countably compact space where player two has a winning strategy in $G^{\omega_1}_1(\mathcal{O}, \mathcal{O}_D)$. Then $wL(X_\delta) \leq 2^{\aleph_0}$.
\end{corollary}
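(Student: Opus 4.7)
The plan is to derive this as the $\kappa = \omega_1$ instance of Theorem~\ref{ArhanQ}. The only item requiring verification is the cardinal arithmetic identity $2^{<\omega_1} = 2^{\aleph_0}$. Unwinding the definition, $2^{<\omega_1} = \sup\{2^\lambda : \lambda < \omega_1\}$; since every cardinal $\lambda < \omega_1$ is at most $\aleph_0$, one has $2^\lambda \leq 2^{\aleph_0}$ for each such $\lambda$, while the value $\lambda = \aleph_0$ itself lies below $\omega_1$ and realizes this upper bound. Hence $2^{<\omega_1} = 2^{\aleph_0}$, and Theorem~\ref{ArhanQ} immediately yields $wL(X_\delta) \leq 2^{\aleph_0}$.

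In this sense, the corollary presents no obstacle of its own. All the technical content has already been carried out in the proof of Theorem~\ref{ArhanQ}: the passage from player two's winning strategy in $G^{\omega_1}_1(\mathcal{O},\mathcal{O}_D)$ to a winning strategy $\tau$ for player one in $G^p_o(\omega_1)$ via the preceding Lemma; the choice of a $<\!\omega_1$-closed elementary submodel $M \prec H(\theta)$ of cardinality $2^{\aleph_0}$ containing the relevant parameters; the verification that $X \cap M$ is countably compact; the extraction, from a hypothetical $G_\delta$ set $V$ disjoint from $\bigcup(\mathcal{U} \cap M)$, of a finite-level refinement $\mathcal{B}'$ of the cover inside $M$; and finally the simulation of a play of $G^p_o(\omega_1)$ against $\tau$ using responses from $\mathcal{B}'$, which contradicts the winning property of $\tau$. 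I would simply cite Theorem~\ref{ArhanQ} for $\kappa = \omega_1$ and record the above computation. (One tacitly needs the regularity hypothesis of Theorem~\ref{ArhanQ}, which is understood here as part of the standing assumptions on the countably compact space $X$.)
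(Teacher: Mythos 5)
Your proposal is correct and matches the paper's intent exactly: the corollary is the $\kappa=\omega_1$ instance of Theorem~\ref{ArhanQ}, together with the observation that $2^{<\omega_1}=2^{\aleph_0}$ since the only infinite cardinal below $\omega_1$ is $\aleph_0$. Your parenthetical note about the regularity hypothesis being tacitly assumed is also accurate, as the corollary's statement inherits that hypothesis from the theorem.
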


\begin{lemma}
Let $X$ be a space such that $c(X) \leq \kappa$. Then player two has a winning strategy in $G^{\kappa^+}_1(\mathcal{O}, \mathcal{O}_D)$.
\end{lemma}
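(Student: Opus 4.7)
My plan is to exhibit an explicit greedy winning strategy for player two: at each inning, pick an element of the current cover that meets the complement of the closure of everything played so far, whenever such a choice is possible.

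Write $W_\alpha = \bigcup \{U_\beta : \beta < \alpha\}$ for the union of player two's previous moves. I would define the strategy as follows at inning $\alpha < \kappa^+$: if $\overline{W_\alpha} = X$, have player two pick any $U_\alpha \in \mathcal{U}_\alpha$; otherwise, the set $X \setminus \overline{W_\alpha}$ is a non-empty open set, and since $\mathcal{U}_\alpha$ covers $X$, there must exist some $U \in \mathcal{U}_\alpha$ such that $V_\alpha := U \cap (X \setminus \overline{W_\alpha}) \neq \emptyset$; have player two pick such a $U$ as $U_\alpha$ (fixing a well-ordering of $\rho$ in advance to make the choice canonical).

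To verify this is winning, I would argue by contradiction: suppose player two follows the strategy but the final union $\bigcup_{\alpha < \kappa^+} U_\alpha$ fails to be dense in $X$. Then $\overline{W_\alpha} \neq X$ for every $\alpha < \kappa^+$, so the non-empty open set $V_\alpha = U_\alpha \cap (X \setminus \overline{W_\alpha})$ is defined at every inning. For any $\alpha < \beta < \kappa^+$, we have $V_\alpha \subset U_\alpha \subset W_\beta$ while $V_\beta \subset X \setminus \overline{W_\beta}$, so $V_\alpha \cap V_\beta = \emptyset$. Hence $\{V_\alpha : \alpha < \kappa^+\}$ is a pairwise disjoint family of non-empty open subsets of $X$ of cardinality $\kappa^+$, contradicting $c(X) \leq \kappa$.

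There is no real obstacle here: the only thing one must notice is that the greedy rule is well-defined because $\mathcal{U}_\alpha$ is an open cover, and the pairwise disjointness of the $V_\alpha$'s is immediate from the fact that each $V_\beta$ lies outside the closure of its predecessors. The argument is essentially the observation that the open-set lattice of $X$ cannot support a strictly increasing $\kappa^+$-chain of closures without producing an antichain of the same size.
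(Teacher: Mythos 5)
Your proof is correct and takes essentially the same route as the paper: both run the same greedy strategy and derive from a failure of density a pairwise disjoint family of $\kappa^+$ many non-empty open sets (the paper keeps auxiliary sets $V_\beta$ disjoint from the previous $U_\alpha$'s and uses the traces $U_\beta \cap V_\beta$, while you intersect $U_\alpha$ directly with $X \setminus \overline{W_\alpha}$), contradicting $c(X) \leq \kappa$.
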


\begin{proof}
We describe the strategy by induction. Let $\beta < \kappa^+$ and suppose player two has picked the open set $U_\alpha$ at inning $\alpha$ for every $\alpha < \beta$. Suppose we have chosen open sets $\{V_\alpha: \alpha < \beta \}$ such that $\{U_\alpha \cap V_\alpha: \alpha < \beta \}$ is a pairwise disjoint family of open sets. If $\bigcup \{U_\alpha: \alpha < \beta \}$ is dense in $X$ then player two has won, otherwise let $V_\beta$ be a non-empty open set such that $V_\beta \cap (\bigcup \{U_\alpha: \alpha < \beta \})=\emptyset$. Suppose at inning $\beta$ player one plays the open cover $\mathcal{O}_\beta$. Choose an open set $U_\beta \in \mathcal{O}_\beta$ such that $U_\beta \cap V_\beta \neq \emptyset$ and let player two play $U_\beta$ at inning $\beta$. If this could be carried on for $\kappa^+$ many moves then $\{V_\alpha: \alpha < \kappa^+\}$ would be a pairwise disjoint family of non-empty open sets having size $\kappa^+$, which contradicts $c(X) \leq \kappa$. Therefore there must be $\beta < \kappa^+$ such that $\overline{\bigcup \{U_\alpha: \alpha < \beta\}}=X$.
\end{proof}

\begin{corollary}
(Juh\'asz) Let $X$ be a compact space. Then $wL(X_\delta) \leq 2^{c(X)}$.
\end{corollary}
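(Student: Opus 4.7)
The plan is to extract this corollary as an immediate consequence of the preceding Lemma combined with Theorem \ref{ArhanQ}. Let $\kappa = c(X)$; we may assume $\kappa \geq \aleph_0$ (the degenerate finite case is trivial). The Lemma then supplies a winning strategy for player two in $G^{\kappa^+}_1(\mathcal{O}, \mathcal{O}_D)$.

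Next, I want to feed this into Theorem \ref{ArhanQ}, with $\kappa^+$ playing the role of the theorem's $\kappa$. For that I need to verify the three hypotheses. First, $X$ is compact, hence countably compact and regular. Second, $\kappa^+$ is uncountable, since $\kappa \geq \aleph_0$. Third, player two has a winning strategy in $G^{\kappa^+}_1(\mathcal{O}, \mathcal{O}_D)$ by the previous step. The conclusion of Theorem \ref{ArhanQ} then reads $wL(X_\delta) \leq 2^{<\kappa^+}$.

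All that remains is the cardinal arithmetic identity $2^{<\kappa^+} = 2^{\kappa}$, which follows immediately from the definition of $2^{<\kappa^+}$ as the supremum of $2^\lambda$ for $\lambda < \kappa^+$, i.e.\ for $\lambda \leq \kappa$. Substituting $\kappa = c(X)$ yields the advertised bound $wL(X_\delta) \leq 2^{c(X)}$.

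There is no real obstacle here; the only thing to keep track of is the index shift and the passage from $2^{<\kappa^+}$ to $2^{\kappa}$. The content of Juh\'asz's theorem is entirely packaged inside the Lemma (which converts the chain condition into a winning strategy in the weak Lindel\"of game of length $\kappa^+$) and Theorem \ref{ArhanQ} (which converts that winning strategy into a cardinal bound on $wL(X_\delta)$).
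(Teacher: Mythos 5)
Your proof is correct and follows exactly the route the paper intends: the corollary is stated without proof precisely because it is the composition of the preceding Lemma (with $\kappa=c(X)$) and Theorem \ref{ArhanQ} applied at length $\kappa^+$, plus the observation $2^{<\kappa^+}=2^{\kappa}$. The only hypothesis worth noting, which you handled, is that compact (Hausdorff) spaces are regular and countably compact, so Theorem \ref{ArhanQ} indeed applies.
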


\begin{question}
Let $X$ be a (countably) compact space such that player II has a winning strategy in $G^{\omega_1}_1(\mathcal{C}, \mathcal{O}_D)$. Is then $c(X_\delta) \leq 2^{\aleph_0}$?
\end{question}

We note that consistently, the above question has a positive answer.

\begin{proposition}
$(2^{\aleph_0}=2^{\aleph_1})$ Let $X$ be a countably compact space such that player two has a winning strategy in $G^{\omega_1}_1(\mathcal{C}, \mathcal{O}_D)$. Then $c(X_\delta) \leq 2^{\aleph_0}$.
\end{proposition}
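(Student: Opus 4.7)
The plan is to mirror the architecture of Theorem~\ref{ArhanQ}. Suppose for contradiction that $c(X_\delta)>2^{\aleph_0}$ and fix a pairwise disjoint family $\{G_\alpha:\alpha<(2^{\aleph_0})^+\}$ of non-empty $G_\delta$-open sets in $X$, written using regularity as $G_\alpha=\bigcap_n U^\alpha_n$ with $\overline{U^\alpha_{n+1}}\subset U^\alpha_n$. Invoking $2^{\aleph_0}=2^{\aleph_1}$, take an $\omega_1$-closed elementary submodel $M\prec H(\theta)$ with $|M|=2^{\aleph_1}=2^{\aleph_0}$, $2^{\aleph_0}+1\subset M$, and $X,\tau,\{G_\alpha\}\in M$.

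Since $(2^{\aleph_0})^+>|M|$, pick $\alpha^*\in(2^{\aleph_0})^+\setminus M$ and set $G^*=G_{\alpha^*}$. The pairwise-disjointness argument from Theorem~\ref{ArhanQ} adapts: if $y\in G^*\cap M$ then by elementarity applied to $\exists \alpha\, y\in G_\alpha$ there is $\alpha'\in M$ with $y\in G_{\alpha'}$, forced by disjointness to equal $\alpha^*$, contradicting $\alpha^*\notin M$. So $G^*\cap M=\emptyset$, whence $X\cap M\subset X\setminus G^*\subset\bigcup_n(X\setminus\overline{U^*_{n+1}})$. By the countable compactness of $X\cap M$ proved in the claim of Theorem~\ref{ArhanQ}, this countable open cover has a finite subcover, giving $k<\omega$ with $X\cap M\subset X\setminus\overline{U^*_{k+1}}$.

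To finish, observe that player two's winning strategy in $G^{\omega_1}_1(\mathcal{C},\mathcal{O}_D)$ converts to one in $G^{\omega_1}_1(\mathcal{O},\mathcal{O}_D)$: any open cover $\mathcal{U}$ admits a maximal pairwise disjoint refinement $\tilde{\mathcal{U}}$ whose members are each contained in some member of $\mathcal{U}$, and a pick from $\tilde{\mathcal{U}}$ is transferred to the containing element of $\mathcal{U}$, preserving density of the union. Dualise via Lemma~\ref{poplayer2} to obtain a winning strategy $\tau^*\in M$ for player one in $G^p_o(\omega_1)$. Now play $\tau^*$: by $\omega_1$-closedness and elementarity, as in Theorem~\ref{ArhanQ}, at each inning $\beta$ the point $x_\beta=\tau^*(\text{history})$ lies in $X\cap M\subset X\setminus\overline{U^*_{k+1}}$, and player two replies with an $M$-open neighbourhood $B_\beta\in M$ of $x_\beta$ with $B_\beta\cap\overline{U^*_{k+1}}=\emptyset$. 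Then $\bigcup_\beta B_\beta$ misses the non-empty open set $U^*_{k+1}$, contradicting the density guaranteed by $\tau^*$.

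The main obstacle is producing such $B_\beta\in M$ for every $x\in X\cap M$, which really requires the specific approximation $U^*_{k+1}$ to lie in $M$ even though $G^*\notin M$. The plan is to build $(U^*_n)$ inductively, choosing $U^*_n\in M$ by elementarity and regularity as long as possible, then letting the tail fall outside $M$ in such a way that the intersection still equals exactly $G^*$; the subtle interaction is that the index $k$ returned by the finite subcover step must be bounded by the length of the $M$-prefix of the sequence, and it is the combination of $\omega_1$-closedness of $M$ with $2^{\aleph_0}=2^{\aleph_1}$ that provides enough room inside $M$ to push the construction far enough.
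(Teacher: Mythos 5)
There is a genuine gap, and you have named it yourself: the entire contradiction hinges on player two being able to answer each point $x_\beta\in X\cap M$ with an open set $B_\beta\in M$ missing the fixed approximation $U^*_{k+1}$ of $G^*$, and nothing in your construction produces such sets. In Theorem~\ref{ArhanQ} those responses come from the $G_\delta$-cover $\mathcal{U}\in M$: every $x\in X\cap M$ lies in some $B_x\in\mathcal{U}\cap M$ by elementarity, $B_x$ is automatically disjoint from the bad set $V$, and the finitely many $M$-approximations $B^x_{m(x)}$ then do the job. In your setting $\{G_\alpha\}$ is a pairwise disjoint family, not a cover, so a point of $X\cap M$ need not lie in any $G_\alpha$, and there is no elementarity statement supplying an $M$-neighbourhood of it avoiding $U^*_{k+1}$; indeed $U^*_{k+1}\notin M$ in general. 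The proposed repair --- building an $M$-prefix of $(U^*_n)$ and hoping the index $k$ lands inside it --- cannot work: $k$ is produced a posteriori by the finite-subcover step and depends on how fast the $U^*_n$ shrink, so it cannot be bounded by the length of a prefix chosen in advance; and if the whole sequence lay in the countably closed model $M$, then $G^*=\bigcap_n U^*_n\in M$, and since the $G_\alpha$ are nonempty and pairwise disjoint the assignment $\alpha\mapsto G_\alpha$ is injective, so $\alpha^*$ would be definable from parameters in $M$ and hence belong to $M$, contradicting its choice. (You also quietly assume regularity of $X$, which is not among the hypotheses of the Proposition.)

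For comparison, the paper's proof is two lines and uses $2^{\aleph_0}=2^{\aleph_1}$ in a much more superficial way: if $c(X)\geq\aleph_2$, then player one wins $G^{\omega_1}_1(\mathcal{C},\mathcal{O}_D)$ outright by playing at every inning one fixed maximal pairwise disjoint family extending a cellular family of size $\aleph_2$, since player two's $\omega_1$ many picks must miss some member of it; hence the hypothesis forces $c(X)\leq\aleph_1$, and Juh\'asz's inequality $c(Y_\delta)\leq 2^{c(Y)}$ gives $c(X_\delta)\leq 2^{\aleph_1}=2^{\aleph_0}$. Your elementary-submodel scheme is really an attempt at the paper's open question of whether $c(X_\delta)\leq 2^{\aleph_0}$ holds without the cardinal-arithmetic assumption, and as it stands it does not close; the transfer from $G^{\omega_1}_1(\mathcal{C},\mathcal{O}_D)$ to $G^{\omega_1}_1(\mathcal{O},\mathcal{O}_D)$ and the dualisation are fine, but the missing selection step is exactly where the difficulty of that question lives.
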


\begin{proof}
It's easy to see that if $c(X) > \aleph_1$, then one has a winning strategy in $G^{\omega_1}_1(\mathcal{C}, \mathcal{O}_D)$. It follows that $c(X) \leq \aleph_1$. Now, Juh\'asz \cite{J} proved that $c(Y_\delta) \leq 2^{c(Y)}$ for every compact space $Y$, and hence $c(X_\delta) \leq 2^{\aleph_1}=2^{\aleph_0}$.
\end{proof}

\begin{question}
Is there a countably compact space such that player II has a winning strategy in $G^{\omega_1}_1(\mathcal{O}, \mathcal{O}_D)$ and $wL(X_\delta)>\aleph_1$?
\end{question}

\begin{question}
Let $X$ be a countably compact space such that player II has a winning strategy in $G^\kappa_{fin}(\mathcal{O}, \mathcal{O}_D)$. Is $wL(X_\delta) \leq 2^{<\kappa}$?
\end{question}

\section{The Suslin Problem for the playful ccc}

Recall that a $\pi$-base is a family $\mathcal{P}$ of non-empty open sets such that for every open set $U \subset X$ there is $P \in \mathcal{P}$ such that $P \subset U$. The $\pi$-weight of $X$ ($\pi w(X)$) is defined as the minimal size of a $\pi$-base.

A local $\pi$-base at $x \in X$ is a family $\mathcal{P}$ of non-empty open subsets of $X$ such that, for every open neighbourhood $U$ of $x$ there is $P \in \mathcal{P}$ such that $P \subset U$. The local $\pi$-character of $x$ ($\pi \chi(x, X)$) is defined as the minimum cardinality of a local $\pi$-base at $x$.

We're going to prove the following theorem.

\begin{theorem} \label{mainthm}
Let $(X, \tau)$ be a regular space with a dense set of points of countable $\pi$-character. If player II has a winning strategy in $G^\omega_1(\mathcal{C}, \mathcal{O}_D)$ then $X$ has a countable $\pi$-base.
\end{theorem}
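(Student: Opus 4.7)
The plan is to produce the countable $\pi$-base by a countable elementary submodel argument driven by the winning strategy $\sigma$ of player II. For each $y \in D$ (the given dense set of points of countable $\pi$-character), fix a countable local $\pi$-base $\mathcal{B}_y = \{B^y_n : n<\omega\}$ at $y$, and take a countable elementary submodel $M \prec H(\theta)$, for $\theta$ large enough regular, with $\sigma$, $X$, $\tau$, $D$ and the assignment $y \mapsto \mathcal{B}_y$ all in $M$. Define
\[
\mathcal{P} := \bigcup \{\mathcal{B}_y : y \in D \cap M\}.
\]
Since $D \cap M$ is countable and each $\mathcal{B}_y$ is countable, $\mathcal{P}$ is a countable family of non-empty open sets, each of which lies in $M$; the goal is to prove that $\mathcal{P}$ is a $\pi$-base for $X$.

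Suppose for contradiction that $V$ is a non-empty open set containing no member of $\mathcal{P}$. The first observation is that $V \cap D \cap M = \emptyset$: any $y$ in this intersection would make $V$ an open neighbourhood of $y$, forcing some $B^y_n \in \mathcal{B}_y \subseteq \mathcal{P}$ to lie inside $V$. Using the regularity of $X$, I choose a non-empty open $V'$ with $\overline{V'} \subseteq V$; then $D \cap M$ is entirely contained in the open set $X \setminus \overline{V'}$. The contradiction will come from exhibiting a play of $G^\omega_1(\mathcal{C}, \mathcal{O}_D)$ against $\sigma$ in which every response $U_n$ is disjoint from $\overline{V'}$, for then $\overline{\bigcup_n U_n}$ misses the non-empty open set $V'$, contradicting that $\sigma$ is a winning strategy.

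At inning $n$, player I picks a maximal antichain $\mathcal{U}_n \in M$ whose members are all local $\pi$-base pieces $B^y_m$ with $y \in D$. Such antichains exist in $M$ by elementarity, since $\{B^y_m : y \in D,\, m<\omega\}$ is a $\pi$-base for $X$; and since the playful ccc implies the ordinary ccc, each $\mathcal{U}_n$ is countable and its members lie in $M$. Elementarity then yields $U_n = \sigma(\mathcal{U}_0,\ldots,\mathcal{U}_n) \in M$ of the form $B^{y_n}_{m_n}$ with $y_n \in D \cap M$, so in particular $y_n \notin \overline{V'}$. The hard part is to arrange, while working inside $M$ (and therefore without direct access to $V'$), that the specific piece $\sigma$ returns actually has closure disjoint from $\overline{V'}$; the route I envision is to refine each $\mathcal{B}_y$ inside $M$, using the regularity of $X$, so that its members have closures shrinking into every open $M$-neighbourhood of $y$, and then to combine this $M$-internal smallness with the external fact that every $y_n \in D \cap M$ sits in the open set $X \setminus \overline{V'}$ in order to force $\overline{B^{y_n}_{m_n}} \cap \overline{V'} = \emptyset$ at each inning.
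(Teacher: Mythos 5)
There is a genuine gap, and it sits exactly where you flagged it: you cannot force the strategy $\sigma$ to return a piece disjoint from $\overline{V'}$. Your hypothesis on $V$ only says that no $B^y_m$ with $y \in D\cap M$ is \emph{contained} in $V$; it does not say that such pieces are disjoint from $V$ (or from $V'$). Since each $\mathcal{U}_n$ you offer is a \emph{maximal} pairwise disjoint family, its union is dense, so some member of $\mathcal{U}_n$ meets $V'$; moreover all members of $\mathcal{U}_n$ lie in $M$ (as $\mathcal{U}_n\in M$ is countable and $M$ is a countable elementary submodel), so elementarity gives you no way to rule out that $\sigma$ picks precisely such a member. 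The proposed repair does not close this: the obstruction $X\setminus\overline{V'}$ is not an element of $M$ (it is defined from $V$, chosen outside $M$), and knowing $y_n\notin\overline{V'}$ says nothing about $B^{y_n}_{m_n}$, because a local $\pi$-base member need not contain $y_n$ nor lie in any prescribed neighbourhood of $y_n$, and which member $\sigma$ returns is entirely out of your hands. In short, when you play against player II's winning strategy you only control the antichains, i.e.\ the ``large'' side of the game, and that is not enough to steer II's responses away from $V'$.

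The way around this --- and it is what the paper does --- is to dualize first: player II has a winning strategy in $G^\omega_1(\mathcal{C},\mathcal{O}_D)$ (equivalently in $G^\omega_1(\mathcal{O}_D,\mathcal{O}_D)$) if and only if player I has a winning strategy $\tau$ in the open--open game $G^o_o(\omega)$ (Theorem \ref{thmequiv}; the key step is the Claim that the family of open sets which are never $\sigma$-responses after a fixed history cannot have dense union). One then runs your elementary submodel argument against $\tau$: assuming $D\cap M$ is not dense, fix an open $G$ with $\overline{G}\cap D\cap M=\emptyset$; since $\tau$ and the (finite) histories of moves chosen in $M$ lie in $M$, each move $\tau((P_\alpha:\alpha<\beta))$ is a non-empty open set belonging to $M$, hence it meets $D\cap M$ in a point $x_\beta\notin\overline{G}$, and now \emph{you}, simulating player II of the open--open game, may choose $P_\beta\in\mathcal{P}(x_\beta)\subset M$ with $P_\beta\subset\tau((P_\alpha:\alpha<\beta))\cap U_\beta$ where $U_\beta\cap G=\emptyset$. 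This is exactly the control your set-up lacks: in the dual game the side you simulate is the one that picks the small sets, so the local $\pi$-bases at points of $D\cap M$ can be used to keep the play away from $G$, and the density guaranteed by the winning strategy yields the contradiction. If you wish to avoid quoting the duality you must in effect reprove it (establish the Claim and diagonalize against $\sigma$ through the non-empty open set of reachable responses); the antichains-of-$\pi$-base-pieces device cannot substitute for that step. Your final assembly (the union of the countable local $\pi$-bases at the points of a countable dense set is a countable $\pi$-base) is fine and is also how the paper concludes.
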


Before going ahead to the proof, let us see how the announced partial ZFC solution to Suslin's Problem follows as a corollary.

\begin{lemma} \label{continuous}
Let $L$ be a complete dense linear order. Then $L$ contains a dense set of points of countable $\pi$-character.
\end{lemma}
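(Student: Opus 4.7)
The plan is to show that every nonempty open interval $(a, b) \subset L$ contains some point $s$ with $\pi\chi(s, L) \leq \omega$. Since open intervals form a base for the order topology, this implies the points of countable $\pi$-character are dense in $L$.

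Here is the construction I would carry out. Use density to pick any $c_0 \in (a, b)$, and inductively choose a strictly increasing sequence $x_0 < x_1 < \ldots < c_0$ by taking $x_0 \in (a, c_0)$ and $x_{n+1} \in (x_n, c_0)$; density of $L$ makes each of these choices possible. Then apply completeness of $L$ to set $s := \sup_n x_n$. Since $x_n < c_0$ for every $n$, we get $s \leq c_0 < b$; and since $a < x_0 \leq s$, also $a < s$. Thus $s \in (a, b)$.

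The heart of the proof is to verify that the countable family $\mathcal{P} = \{(x_n, x_{n+1}) : n < \omega\}$ is a local $\pi$-base at $s$. Each $(x_n, x_{n+1})$ is a nonempty open set by density of the order. For any basic neighborhood $(u, v)$ of $s$ (noting that $s$ is interior, since $a < s < b$), we have $u < s = \sup_n x_n$, so some $n$ satisfies $u < x_n$; then $x_n < x_{n+1} < s < v$ gives $(x_n, x_{n+1}) \subseteq (u, v)$. So $\mathcal{P}$ is a countable local $\pi$-base at $s$, witnessing $\pi\chi(s, L) \leq \omega$.

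There is no serious obstacle. The one place where one must be slightly careful is the definition of local $\pi$-base recalled at the start of Section~3: it requires only that every neighborhood of $s$ contain some $P \in \mathcal{P}$, not that $s \in P$. This is exactly what makes the construction go through, since the intervals $(x_n, x_{n+1})$ all lie strictly below $s$; had $\pi$-character been replaced by character, the argument would have needed either the symmetric construction on the right side of $s$ or the hypothesis $\mathrm{cf}^+(s) = \omega$ as well.
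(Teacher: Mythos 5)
Your proof is correct and follows essentially the same route as the paper's: pick an increasing sequence inside the interval, take its supremum $s$ by completeness, and use the intervals with right endpoint below (paper: $(x_n,s)$; you: $(x_n,x_{n+1})$) as a countable local $\pi$-base at $s$. The only difference is this harmless choice of witnessing intervals, so there is nothing to add.
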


\begin{proof}
Let $(a,b)$ be a non-empty open interval. We claim that $(a,b)$ contains a point of countable $\pi$-character. Let $c \in (a,b)$ and suppose you have constructed $\{x_i: i \leq n \} \subset (a,c)$. Choose $x_{n+1} \in (x_n,c)$. Then $y=\sup \{x_n: n < \omega \} \in (a,b)$ and $\{(x_n,y): n < \omega \}$ is a local $\pi$-base at $y$.
\end{proof}

\begin{corollary}
Let $X$ be a continuous linearly ordered topological space without isolated points such that player two has a winning strategy in $G^\omega_1(\mathcal{C}, \mathcal{O}_D)$. Then $X$ has a countable $\pi$-base.
\end{corollary}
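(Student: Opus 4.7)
The plan is to observe that the corollary is an essentially immediate packaging of Theorem \ref{mainthm} together with Lemma \ref{continuous}. The three things I need to verify are: (i) $X$ is regular, (ii) $X$ has a dense set of points of countable $\pi$-character, and (iii) the game hypothesis carries over verbatim from the corollary to the theorem.

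For (i), I would simply invoke the standard fact that every linearly ordered topological space is hereditarily normal (and in particular regular); no work is needed beyond citing this. For (ii), I read ``continuous'' in the sense of the paper's previous usage, namely that the underlying order is a Dedekind-complete densely ordered set. Combined with the assumption of no isolated points (which ensures denseness of the order cannot collapse to a discrete configuration), this puts $X$ into the hypothesis of Lemma \ref{continuous}. Applying that lemma produces a dense subset of $X$ consisting of points of countable $\pi$-character. For (iii), the hypothesis on player two is already stated in exactly the form required by Theorem \ref{mainthm}, so there is nothing to translate. Plugging (i), (ii), (iii) into Theorem \ref{mainthm} yields $\pi w(X) \le \aleph_0$.

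There is no real obstacle at this stage --- all the genuine content has been loaded into Theorem \ref{mainthm} and Lemma \ref{continuous}. The only tiny subtlety worth flagging in the write-up is a pedantic check that a continuous LOTS without isolated points truly is a \emph{complete dense} linear order in the order-theoretic sense demanded by Lemma \ref{continuous}; once that identification is made, the corollary follows in a single line.
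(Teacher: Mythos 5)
Your proposal is correct and matches the paper's intended argument exactly: the paper gives no separate proof of this corollary, since it is meant to follow immediately by combining Lemma \ref{continuous} (a complete dense linear order has a dense set of points of countable $\pi$-character) with Theorem \ref{mainthm}, using the standard fact that a LOTS is regular. Your reading of ``continuous'' as Dedekind-complete and densely ordered is the intended one, so nothing further is needed.
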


Although we could give a more direct proof of Theorem $\ref{mainthm}$ we prefer to exploit the duality between the game $G^\kappa_1(\mathcal{C}, \mathcal{O}_D)$ and the open-open game. We believe this duality to have been known for some time, but since we couldn't find a proof of it in the literature, we provide one in Theorem $\ref{thmequiv}$ for the reader's convenience.

The open-open game of length $\kappa$ ($G^o_o(\kappa)$) is the two-player game where at inning $\alpha < \kappa$ player one chooses a non-empty open set $U_\alpha \subset X$ and player II chooses a non-empty open set $V_\alpha \subset U_\alpha$. Player I wins if $\overline{\bigcup \{V_\alpha: \alpha < \kappa \}}=X$ (see \cite{DKZ}).

First of all we note that the game $G^\kappa_1(\mathcal{C}, \mathcal{O}_D)$ is equivalent to the game $G^\kappa_1(\mathcal{O}_D, \mathcal{O}_D)$.

\begin{proposition}
Player I (Player II) has a winning strategy in $G^\kappa_1(\mathcal{O}_D, \mathcal{O}_D)$ if and only if Player I (Player II) has a winning strategy in $G^\kappa_1(\mathcal{C}, \mathcal{O}_D)$.
\end{proposition}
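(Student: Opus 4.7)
The plan rests on the obvious containment $\mathcal{C} \subset \mathcal{O}_D$ (every maximal pairwise disjoint family of non-empty open sets has dense union) together with a simple refinement operation that replaces an arbitrary member of $\mathcal{O}_D$ by one in $\mathcal{C}$. Two of the four implications are immediate. Any winning strategy for Player I in $G^\kappa_1(\mathcal{C}, \mathcal{O}_D)$ is already legal as a strategy for I in $G^\kappa_1(\mathcal{O}_D, \mathcal{O}_D)$ (the moves are legitimate and the winning condition is the same); dually, any winning strategy for II in $G^\kappa_1(\mathcal{O}_D, \mathcal{O}_D)$ restricts to a winning strategy for II in $G^\kappa_1(\mathcal{C}, \mathcal{O}_D)$, since $\mathcal{C}$-moves by I are a fortiori $\mathcal{O}_D$-moves.

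For the remaining two directions I would first establish a refinement lemma. Given $\mathcal{U} \in \mathcal{O}_D$, Zorn's Lemma produces a pairwise disjoint family $\mathcal{C}(\mathcal{U})$ of non-empty open sets, each contained in some member of $\mathcal{U}$, that is maximal with respect to this property. I claim $\mathcal{C}(\mathcal{U}) \in \mathcal{C}$. Indeed, if some non-empty open $W$ were disjoint from $\bigcup \mathcal{C}(\mathcal{U})$, density of $\bigcup \mathcal{U}$ would give $U \in \mathcal{U}$ with $W \cap U \neq \emptyset$, and $W \cap U$ could then be adjoined to $\mathcal{C}(\mathcal{U})$, contradicting maximality. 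I would then fix once and for all a choice function $\pi_{\mathcal{U}}$ assigning to each $C \in \mathcal{C}(\mathcal{U})$ a specific element $\pi_{\mathcal{U}}(C) \in \mathcal{U}$ with $C \subset \pi_{\mathcal{U}}(C)$.

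The nontrivial directions are then handled by translating strategies through $\mathcal{C}(\cdot)$ and $\pi$. Suppose II has a winning strategy $\sigma$ in $G^\kappa_1(\mathcal{C}, \mathcal{O}_D)$. In $G^\kappa_1(\mathcal{O}_D, \mathcal{O}_D)$, II responds to I's move $\mathcal{U}_\alpha$ by computing $C_\alpha = \sigma((\mathcal{C}(\mathcal{U}_\beta) : \beta \leq \alpha))$ and playing $\pi_{\mathcal{U}_\alpha}(C_\alpha)$; density of $\bigcup_\alpha C_\alpha$ propagates to the larger union $\bigcup_\alpha \pi_{\mathcal{U}_\alpha}(C_\alpha)$. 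Conversely, suppose I has a winning strategy $\sigma$ in $G^\kappa_1(\mathcal{O}_D, \mathcal{O}_D)$. In $G^\kappa_1(\mathcal{C}, \mathcal{O}_D)$, I maintains an auxiliary run of the $\mathcal{O}_D$-game by setting $U_\beta := \pi_{\mathcal{U}_\beta}(C_\beta)$ at each previous inning and, at inning $\alpha$, playing the refinement $\mathcal{C}(\mathcal{U}_\alpha)$ of $\mathcal{U}_\alpha := \sigma((U_\beta : \beta < \alpha))$. Since $\sigma$ wins the simulated game, $\bigcup_\alpha U_\alpha$ is not dense, hence neither is its subset $\bigcup_\alpha C_\alpha$.

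I do not anticipate any real obstacle; the whole argument is a bookkeeping exercise built on the refinement lemma, a minor variant of a standard fact about the regular-open algebra. The one point that needs a little care is that the translation of strategies must be causal (each $U_\alpha$ is determined from data available at inning $\alpha$), which is why the choice function $\pi$ is fixed in advance rather than made on the fly.
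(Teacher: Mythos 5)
Your proposal is correct, and it is exactly the argument the paper presumably has in mind: the paper states this proposition without proof, treating it as routine. The two easy directions follow from $\mathcal{C}\subset\mathcal{O}_D$, and your refinement lemma (a maximal disjoint open refinement of a family with dense union is itself maximal cellular) together with the fixed choice function $\pi_{\mathcal{U}}$ handles the two nontrivial strategy translations correctly, including the causality point.
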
 

Finally, we prove that the latter game is the dual of the open-open game. 

\begin{theorem} \label{thmequiv} {\ \\}
\begin{enumerate}
\item \label{playerone} Player one has a winning strategy in $G^o_o(\kappa)$ if and only if player two has a winning strategy in $G^\kappa_1(\mathcal{O}_D, \mathcal{O}_D)$.
\item \label{playertwo} Player two has a winning strategy in $G^o_o(\kappa)$ if and only if player one has a winning strategy in $G^\kappa_1(\mathcal{O}_D, \mathcal{O}_D)$.
\end{enumerate}
\end{theorem}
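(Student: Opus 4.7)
The plan is to establish each of the four implications by strategy transfer, running one game in parallel to simulate the other. Three of the four directions are pure bookkeeping, and only one requires a separate Claim, analogous to the one used earlier in the proof of the $G^\kappa_1(\mathcal{O}, \mathcal{O}_D)$ versus $G^p_o$ duality; that is the only substantive step.

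For the forward direction of (\ref{playerone}), I would take a winning strategy $\sigma$ for player one in $G^o_o(\kappa)$ and have player two in $G^\kappa_1(\mathcal{O}_D, \mathcal{O}_D)$ run $\sigma$ in parallel. At inning $\alpha$, let $W_\alpha = \sigma(V_\beta : \beta < \alpha)$; when player one plays $\mathcal{U}_\alpha \in \mathcal{O}_D$, density of $\bigcup \mathcal{U}_\alpha$ yields some $U_\alpha \in \mathcal{U}_\alpha$ meeting $W_\alpha$, and I would have player two play $U_\alpha$ while recording $V_\alpha := U_\alpha \cap W_\alpha$ as the simulated open-open response. Since $\sigma$ is winning and $V_\alpha \subset U_\alpha$, one gets $\overline{\bigcup U_\alpha} \supset \overline{\bigcup V_\alpha} = X$. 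The reverse direction of (\ref{playertwo}) is the symmetric construction: given winning $\tau$ for player one in $G^\kappa_1(\mathcal{O}_D, \mathcal{O}_D)$, player two in the open-open game computes $\mathcal{U}_\alpha = \tau(U_\beta : \beta < \alpha)$ and, on player one's move $W_\alpha$, picks some $U_\alpha \in \mathcal{U}_\alpha$ meeting $W_\alpha$ and responds with $V_\alpha := U_\alpha \cap W_\alpha$; since $\tau$ ensures that $\bigcup U_\alpha$ is not dense, neither is $\bigcup V_\alpha$.

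For the forward direction of (\ref{playertwo}), given winning $\sigma$ for player two in $G^o_o(\kappa)$, player one in the selection game plays at inning $\alpha$ the family
\[
\mathcal{U}_\alpha = \{\sigma((W_\beta : \beta < \alpha)^\frown(W)) : W \text{ a non-empty open subset of } X\},
\]
where $(W_\beta : \beta < \alpha)$ is the simulated open-open history built so far. Since each $\sigma(\ldots, W) \subset W$ is non-empty, $\bigcup \mathcal{U}_\alpha$ meets every non-empty open set, so $\mathcal{U}_\alpha \in \mathcal{O}_D$. When player two picks $U_\alpha \in \mathcal{U}_\alpha$, a fixed choice function extracts some $W_\alpha$ with $\sigma((W_\beta : \beta \leq \alpha)) = U_\alpha$ and appends it to the simulated history; $\sigma$ being winning then gives that $\bigcup U_\alpha$ is not dense.

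The only substantive implication is the reverse direction of (\ref{playerone}). Given winning $\tau$ for player two in $G^\kappa_1(\mathcal{O}_D, \mathcal{O}_D)$, I would prove the following Claim: for every $\tau$-respecting partial play $(\mathcal{U}_\beta : \beta < \alpha)$ there is a non-empty open $U^* \subset X$ such that every non-empty open $V \subset U^*$ equals $\tau((\mathcal{U}_\beta : \beta < \alpha)^\frown(\mathcal{U}))$ for some $\mathcal{U} \in \mathcal{O}_D$. Indeed, let $\mathcal{V}$ denote the family of non-empty open sets not so representable; if $\bigcup \mathcal{V}$ were dense then $\mathcal{V} \in \mathcal{O}_D$ would be a legal move for player one, and $\tau$'s response would be a member of $\mathcal{V}$ of the forbidden form, a contradiction. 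Hence $U^* := X \setminus \overline{\bigcup \mathcal{V}}$ works, and any non-empty open $V \subset U^*$ is disjoint from $\bigcup \mathcal{V}$ and therefore lies outside $\mathcal{V}$, i.e., is representable. Using this I would have player one's open-open strategy play exactly this $U^*$ at each inning; any response $V_\alpha \subset U^*$ by player two then determines a matching $\mathcal{U}_\alpha$ with $\tau((\mathcal{U}_\beta : \beta \leq \alpha)) = V_\alpha$, and the winning property of $\tau$ gives $\overline{\bigcup V_\alpha} = X$. The only subtlety is keeping the chosen $\mathcal{U}_\alpha$ coherent with the running history, which is handled by standard transfinite recursion.
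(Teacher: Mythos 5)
Your proposal is correct and follows essentially the same route as the paper: three routine strategy transfers, plus (for the one substantive direction, the converse of (\ref{playerone})) the same key claim that the family of open sets not of the form $\tau((\mathcal{U}_\beta:\beta<\alpha)^\frown(\mathcal{U}))$ cannot have dense union, which yields an open set all of whose non-empty open subsets are representable. The only cosmetic difference is that in the forward direction of (\ref{playerone}) you feed the intersections $U_\alpha \cap W_\alpha$ back into the open-open strategy, which is slightly more careful about the legality of the simulated play than the paper's bookkeeping, but it is not a different argument.
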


\begin{proof}
To prove the direct implication of ($\ref{playerone}$), let $\tau$ be a winning strategy for player one in $G^o_o(\kappa)$. Given $\mathcal{O} \in \mathcal{O}_D$, let $\sigma((\mathcal{O}))$ be any open set $O \in \mathcal{O}$ such that $O \cap \tau(\emptyset) \neq \emptyset$.

Now, suppose we have defined $\sigma$ for sequences of order-type $\leq \alpha$ and let $(\mathcal{O}_\beta: \beta \leq \alpha)$ be a sequence of members of $\mathcal{O}_D$. Then just let $\sigma ((\mathcal{O}_\beta: \beta \leq \alpha ))$ be any open set $O \in \mathcal{O}_\alpha$ such that $\tau((\sigma((\mathcal{O}_\beta: \beta \leq \gamma)): \gamma < \alpha )) \cap O \neq \emptyset$. We claim that $\sigma$ is a winning strategy for player two in $G^\kappa_1(\mathcal{O}_D, \mathcal{O}_D)$. Indeed, let $(\mathcal{O}_0, O_0, \mathcal{O}_1, O_1, \dots, \mathcal{O}_\alpha, O_\alpha, \dots)$ be a play of $G^\kappa_1(\mathcal{O}_D, \mathcal{O}_D)$, where player two plays according to $\sigma$. Then the set $V_\alpha=\tau((\sigma((\mathcal{O}_\gamma: \gamma \leq \beta)): \beta < \alpha)) \cap O_\alpha$ is non-empty. But since $V_\alpha \subset \tau((\sigma((\mathcal{O}_\gamma: \gamma \leq \beta)): \beta < \alpha))$ and $\tau$ is a winning strategy for player I in $G^o_o(\kappa)$ we must have that $\bigcup_{\alpha < \kappa} V_\alpha$ is dense. Hence $\bigcup_{\alpha < \kappa} O_\alpha$ is dense too and we are done.

Conversely, let $\sigma$ be a winning strategy for player II in $G^\kappa_1(\mathcal{O}_D, \mathcal{O}_D)$.

\noindent {\bf Claim.} Let $\{\mathcal{O}_\alpha: \alpha <\beta\} \subset \mathcal{O}_D$. Then there is an open set $V$ such that for every $U \subset V$ there is $\mathcal{O} \in \mathcal{O}_D$ with $U=\sigma((\mathcal{O}_\alpha: \alpha < \beta)^\frown (\mathcal{O}))$.

\begin{proof}[Proof of Claim] Let $\mathcal{U}=\{U$ open: $(\forall \mathcal{O} \in \mathcal{O}_D) (U \neq \sigma ((\mathcal{O}_\alpha: \alpha < \beta)^\frown (\mathcal{O})) \}$. By definition $\mathcal{U} \notin \mathcal{O}_D$, so there must be a non-empty open set $V$ such that $V \cap \bigcup \mathcal{U}=\emptyset$. By definition of $\mathcal{U}$, for every $U \subset V$ open there must be an $\mathcal{O} \in \mathcal{O}_D$ such that $U=\sigma ((\mathcal{O}_\alpha: \alpha < \beta)^\frown (\mathcal{O}))$, as we wanted.
\renewcommand{\qedsymbol}{$\triangle$}
\end{proof}

Now, use the claim to choose an open set $V_0$ such that for all $U \subset V_0$ there is $\mathcal{O} \in \mathcal{O}_D$ such that $\sigma((\mathcal{O}))=U$ and let $\tau(\emptyset)=V_0$.

Suppose you have defined $\tau$ for every sequence of order type $\leq \alpha$. Let $\{U_\beta: \beta \leq \alpha \}$ be a sequence of open sets and $\{\mathcal{O}_\beta: \beta \leq \alpha \}$ be a sequence of elements of $\mathcal{O}_D$ such that $U_\beta=\sigma((\mathcal{O}_\gamma: \gamma \leq \beta ))$. Use the Claim to choose an open set $V_\alpha$ such that for every open set $U \subset V_\alpha$ there is $\mathcal{O} \in \mathcal{O}_D$ such that $U=\sigma((\mathcal{O}_\beta: \beta < \alpha)^\frown (\mathcal{O}))$ and define $\tau((U_\beta: \beta \leq \alpha))$ to be $V_\alpha$. We claim that $\tau$ is a winning strategy for player one in $G^o_o(\kappa)$.

Indeed, let $(V_0, U_0, V_1, U_1, \dots V_\alpha, U_\alpha, \dots)$ be a play of $G^o_o(\kappa)$ where player one plays according to $\tau$. Then there must be a sequence $(\mathcal{O}_\alpha: \alpha < \kappa)$ of elements of $\mathcal{O}_D$ such that $U_\beta=\sigma((\mathcal{O}_\alpha: \alpha \leq \beta))$, for all $\beta < \kappa$. Since $\sigma$ is a winning strategy for player two in $G^\kappa_1(\mathcal{O}_D, \mathcal{O}_D)$ we must have that $\bigcup_{\beta < \kappa} U_\beta$ is dense and that proves that $\tau$ is a winning strategy for player one in $G^o_o(\kappa)$.

To prove the direct implication of ($\ref{playertwo}$), let $\tau$ be a winning strategy for player two in $G^o_o(\kappa)$. Call $\rho$ the set of all open sets of $X$. We first let $\sigma(\emptyset)=\{\tau(O): O \in \rho \}$. Now suppose we have defined $\sigma$ for sequences of order-type $\leq \alpha$ in such a way that if $U_\beta$ is the open set played by player two in the $\beta$-th inning, then there are open sets $\{V_\beta: \beta \leq \alpha \}$ with $U_\beta=\tau((V_\gamma: \gamma \leq \beta))$. We simply define $\sigma((U_\beta: \beta \leq \alpha))$ to be $\{\tau((V_\gamma: \gamma \leq \alpha)^\frown (O)): O \in \rho \}$. We now check that $\sigma$ is a winning strategy for player one in $G^\kappa_1(\mathcal{O}_D, \mathcal{O}_D)$. Let $\{\mathcal{O}_0, U_0, \mathcal{O}_1, U_1, \dots \mathcal{O}_\alpha, U_\alpha, \dots \}$. be a play where player one plays according to $\sigma$. So we can find a sequence of open sets $\{V_\beta: \beta < \kappa \}$ such that $U_\alpha=\tau((V_\beta: \beta \leq \alpha ))$ and hence $\bigcup \{U_\alpha: \alpha < \kappa \}$ is not dense, since $\tau$ is a winning strategy for player two in $G^o_o(\kappa)$. So $\sigma$ is a winning strategy for player one in $G^\kappa_1(\mathcal{O}_D, \mathcal{O}_D)$.

To prove the converse implication of ($\ref{playertwo}$), let $\sigma$ be a winning strategy for player one in $G^\kappa_1(\mathcal{O}_D, \mathcal{O}_D)$. Given an open set $U$, let $V \in \sigma(\emptyset)$ be any open set such that $U \cap V \neq \emptyset$ and set $\tau((U))=U \cap V$. Suppose $\tau$ has been defined for all sequences of open sets of order type $\leq \alpha$. Given $(U_\beta: \beta \leq \alpha) \subset \rho$, let $V$ be any open set $V \in \sigma((\tau((U_\gamma: \gamma \leq \beta)): \beta < \alpha))$ such that $U_\alpha \cap V \neq \emptyset$ and set $\tau((U_\beta: \beta \leq \alpha))=U_\alpha \cap V$. We claim that $\tau$ thus defined is a winning strategy for player two in $G^o_o(\kappa)$. Indeed, let $(V_0, U_0, V_1, U_1, \dots V_\alpha, U_\alpha, \dots)$ be a play where player two plays according $\tau$.  Then, for every $\alpha < \kappa$ there is $ G_\alpha \in \sigma((\tau((U_\gamma: \gamma \leq \beta)): \beta <\kappa ))$ such that $U_\alpha \subset G_\alpha$. Now $\bigcup_{\alpha < \kappa} G_\alpha$ is not dense becuse $\sigma$ is a winning strategy for player one in $G^\kappa_1(\mathcal{O}_D, \mathcal{O}_D))$ and hence $\bigcup \{U_\alpha: \alpha < \kappa \}$ can't be dense either. Therefore $\tau$ is a winning strategy for player two in $G^o_o(\kappa)$ and we are done.

\end{proof}

\begin{theorem}
Let $\kappa$ be an infinite cardinal and $X$ be a regular space with a dense set of points of $\pi$-character $\leq 2^{<\kappa}$ where player one has a winning strategy in $G^o_o(\kappa)$. Then $\pi w(X) \leq 2^{<\kappa}$.
\end{theorem}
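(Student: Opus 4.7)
The plan is to combine an elementary-submodel argument with a play of $G^o_o(\kappa)$ against the given winning strategy $\tau$ for player one. Let $D$ denote the given dense set of points of $\pi$-character $\le 2^{<\kappa}$. Pick $M \prec H(\theta)$ that is $<\kappa$-closed, of cardinality $2^{<\kappa}$, contains $2^{<\kappa}+1$ as a subset (so in particular $\kappa \subset M$), and contains $X, \tau, D$ as elements. For each $y \in D \cap M$, elementarity provides a local $\pi$-base $\mathcal{B}_y \in M$ at $y$ of cardinality $\leq 2^{<\kappa}$, and since $2^{<\kappa}+1 \subset M$ we get $\mathcal{B}_y \subset M$. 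Put $\mathcal{P} = \bigcup \{ \mathcal{B}_y : y \in D \cap M \}$; then $|\mathcal{P}| \leq 2^{<\kappa}$, and I claim that $\mathcal{P}$ is a $\pi$-base.

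Suppose for contradiction that $W$ is a non-empty open set with no $P \in \mathcal{P}$ satisfying $P \subset W$. Using regularity of $X$, fix a non-empty open $W'$ with $\overline{W'} \subset W$. Now simulate a play of $G^o_o(\kappa)$: player one follows $\tau$, and at inning $\alpha$ player two's move $V_\alpha \in \mathcal{P}$ is chosen as follows. By induction on $\alpha$, the history $(V_\beta : \beta < \alpha)$ lies in $M$ (using that each $V_\beta \in \mathcal{P} \subset M$, that $\kappa \subset M$, and the $<\kappa$-closure), hence $U_\alpha := \tau((V_\beta : \beta < \alpha)) \in M$. Density of $D$ together with elementarity gives $y_\alpha \in D \cap U_\alpha \cap M$. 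If $y_\alpha \notin \overline{W'}$, the open neighborhood $U_\alpha \setminus \overline{W'}$ of $y_\alpha$ contains some $V_\alpha \in \mathcal{B}_{y_\alpha} \subset \mathcal{P}$, which is player two's move. If instead $y_\alpha \in \overline{W'}$, then $y_\alpha \in W$ (because $\overline{W'} \subset W$), so the open neighborhood $U_\alpha \cap W$ of $y_\alpha$ contains some $P \in \mathcal{B}_{y_\alpha} \subset \mathcal{P}$ with $P \subset W$, contradicting the choice of $W$.

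Either the second alternative triggers at some inning and yields an immediate contradiction, or the construction runs through all of $\kappa$, in which case every $V_\alpha$ is disjoint from $\overline{W'}$, so $\bigcup_{\alpha < \kappa} V_\alpha$ misses the non-empty open set $W'$ and in particular is not dense, contradicting that $\tau$ is a winning strategy for player one. Either way we reach a contradiction, so $\mathcal{P}$ is a $\pi$-base and $\pi w(X) \leq 2^{<\kappa}$. The decisive trick, which I expect to be the main obstacle for anyone trying to reproduce the argument, is the passage from $W$ to its regular refinement $W'$: without this step a point $y_\alpha \in \overline{W} \setminus W$ would block both alternatives, whereas after the passage any $y_\alpha \in \overline{W'}$ is automatically in the interior $W$, converting the troublesome boundary case into precisely the one that exhibits a forbidden $P \subset W$.
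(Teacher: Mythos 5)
Your proposal is correct and is essentially the paper's own argument: the same $<\kappa$-closed elementary submodel $M$ of size $2^{<\kappa}$, the same simulated play of $G^o_o(\kappa)$ in which player one follows $\tau$, player two's moves are drawn from local $\pi$-bases (contained in $M$) at points of $D \cap M$, and all moves are kept disjoint from a regularity-shrunken copy of the allegedly missed open set, contradicting that $\tau$ is winning. The only cosmetic difference is organizational: you verify directly that $\mathcal{P}=\bigcup\{\mathcal{B}_y : y \in D\cap M\}$ is a $\pi$-base, disposing of the case $y_\alpha \in \overline{W'}$ inline, whereas the paper first proves that $D\cap M$ is dense (so its chosen points automatically avoid $\overline{G}$) and then assembles the global $\pi$-base from the local ones.
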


\begin{proof}
Denote by $\rho$ the set of all open sets of $X$. Fix a winning strategy $\tau$ for player one in $G^o_o(\kappa)$ and let $D=\{x \in X: \pi \chi(x,X) \leq 2^{<\kappa} \}$. By assumption, the set $D$ is dense in $X$. Let $\theta$ be a large enough regular cardinal and $M$ be a $<\kappa$-closed elementary submodel of $H(\theta)$ such that $X, \rho, \tau, D \in M$, $|M|=2^{<\kappa}$ and $2^{<\kappa} \subset M$.

We claim that $D \cap M$ is dense in $X$. Suppose this is not the case. Then there is an open set $G \subset X$ such that $\overline{G} \cap D \cap M=\emptyset$. Note that nevertheless $D \cap M$ is dense in the possibly coarser topology generated by $\rho \cap M$. Now, the first move of player one $\tau(\emptyset)$ is an open set belonging to $M$ and thus we can fix a point $x_1 \in \tau(\emptyset) \cap D \cap M$. Let $U_1$ be an open neighbourhood of $x_1$ such that $U_1 \cap G=\emptyset$. Let $\mathcal{P}(x_1) \in M$ be a local $\pi$-base at $x_1$ having size $2^{<\kappa}$. We actually have $\mathcal{P}(x_1) \subset M$, so we can find $P_1 \in M$ such that $P_1 \subset U_1 \cap \tau(\emptyset)$. We let player two choose $P_1$ in their first move. 

Let $\beta < \kappa$ and suppose that player two has picked open sets $\{P_\alpha: \alpha < \beta \} \subset M$ such that $P_\alpha \cap G=\emptyset$. By $<\kappa$-closedness of $M$ we have $\{P_\alpha: \alpha < \beta \} \in M$, and since $\tau \in M$ we have $\tau((P_\alpha: \alpha < \beta)) \in M$. Hence we can find a point $x_\beta \in D \cap M \cap \tau((P_\alpha: \alpha < \beta))$. Let $U_\beta$ be an open neighbourhood of $x_\beta$ disjoint from $G$. Let $\mathcal{P}(x_\beta) \in M$ be a local $\pi$-base at $x_\beta$ having size $2^{<\kappa}$. We actually have $\mathcal{P}(x_\beta) \subset M$ and hence we can fix an open set $P_\beta \in M$ such that $P_\beta \subset \tau((P_\alpha: \alpha < \beta)) \cap U_\beta$. We let player two pick $P_\beta$ in the $\beta$-th inning.

Since $\tau$ is a winning strategy for player one in $G^o_o(\kappa)$ we must have that $\bigcup \{P_\alpha: \alpha < \kappa\}$ is dense in $X$, but this contradicts the fact that $P_\alpha \cap G=\emptyset$, for every $\alpha < \kappa$

Therefore $D \cap M$ is dense in $X$ and hence $X$ has a $2^{<\kappa}$-sized dense set of points of $\pi$-character $2^{<\kappa}$. Now, putting together the $2^{<\kappa}$-sized $\pi$-bases at each of the points of $D \cap M$ one gets a $2^{<\kappa}$-sized (global) $\pi$-base for our space.
\end{proof}

\begin{corollary}
Let $X$ be a regular space with a dense set of points of countable $\pi$-character where player one has a winning strategy in $G^o_o(\omega)$. Then $X$ has a countable $\pi$-base.
\end{corollary}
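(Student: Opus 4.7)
The plan is to deduce this corollary as an immediate specialization of the preceding theorem with $\kappa = \omega$. First I would observe that $2^{<\omega} = \sup_{n < \omega} 2^n = \aleph_0$, so the cardinal $2^{<\kappa}$ appearing in the theorem collapses to $\aleph_0$ in this case.

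Next I would check that the hypotheses of the theorem hold under the assumptions of the corollary: the existence of a dense set of points of countable $\pi$-character is exactly the hypothesis ``dense set of points of $\pi$-character $\leq 2^{<\kappa}$'' with $\kappa = \omega$, and the assumption that player one has a winning strategy in $G^o_o(\omega)$ is exactly the game-theoretic hypothesis of the theorem at $\kappa = \omega$. Regularity is assumed directly.

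Applying the theorem then gives $\pi w(X) \leq 2^{<\omega} = \aleph_0$, which is precisely the statement that $X$ has a countable $\pi$-base. Since this is a straightforward instantiation, there is no real obstacle; the only point to state carefully is the computation $2^{<\omega} = \aleph_0$, and perhaps a brief remark noting that the elementary submodel argument in the proof of the theorem does not degenerate when $\kappa = \omega$ (one obtains a countable $<\omega$-closed submodel, which is just a countable elementary submodel, and the $\omega$-length play proceeds exactly as in the general case).
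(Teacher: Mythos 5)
Your proposal is correct and matches the paper's intent: the corollary is stated as an immediate instantiation of the preceding theorem with $\kappa=\omega$, using $2^{<\omega}=\aleph_0$, which is exactly what you do. Your added remark that a $<\omega$-closed elementary submodel is just a countable elementary submodel is a fine sanity check but not needed.
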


\begin{question}
Is there a space such that player II has a winning strategy in $G^{\omega_1}_1(\mathcal{C}, \mathcal{O}_D)$ and $\pi w(X) >\aleph_1$?
\end{question}

\begin{question}
Let $X$ be a space with a dense set of points of countable $\pi$-character were player two has a winning strategy in $G^\omega_{fin}(\mathcal{O}_D, \mathcal{O}_D)$. Is it true that $X$ has a countable $\pi$-base?
\end{question}

\section{Acknowledgements}

The author is grateful to FAPESP for financial support through postdoctoral grant 2013/14640-1, \emph{Discrete sets and cardinal invariants in set-theoretic topology} and to Lucia Junqueira for helpful discussion.

\end{document}